\newtheorem{theorem}{Theorem}[section]
\newtheorem{lemma}[theorem]{Lemma}
\makeatletter \@addtoreset{equation}{section} \makeatother
\newcommand{\beq}{\begin{equation}}
\newcommand{\eeq}{\end{equation}}
\def\G#1{
  \gnum=`#1
  \ifnum \gnum>`Z
     \advance\gnum by-`a
     \ifcase \gnum \alpha \or \beta \or \gamma \or \delta \or
        \epsilon\or\phi\or\theta\or\eta\or\iota\or\or\kappa\or
        \lambda\or\mu\or\nu\or\or\pi\or\xi\or\rho\or\sigma\or
        \tau\or\upsilon\or\or\omega\or\chi\or\psi\or\zeta\fi
  \else
     \advance\gnum by-`A
     \ifcase \gnum \Alpha \or \Beta \or \Gamma \or \Delta \or
        \Epsilon\or\Phi\or\Theta\or\Eta\or\Iota\or\or\Kappa\or
        \Lambda\or\Mu\or\Nu\or\or\Pi\or\Xi\or\Rho\or\Sigma\or
        \Tau\or\Upsilon\or\or\Omega\or\Chi\or\Psi\or\Zeta\fi
  \fi}
\def\vp{\varphi}
\def\half{\TS{\frac{1}{2}}}
\def\R{\mathbb{R}}
\def\TS{\textstyle}
\def\com#1{\quad{\textrm{#1}}\quad}
\def\eq#1{(\ref{#1})}
\def\nn{\nonumber}
\def\ge{\geqslant}
\def\le{\leqslant}
\begin{document}

\title{The Vacuum in Nonisentropic Gas Dynamics}

\author{Geng Chen\thanks{\texttt{chen@math.psu.edu}. Department of
    Mathematics, Pennsylvania State University, University Park, PA,
    16802},\quad 
  Robin Young\thanks{\texttt{young@math.umass.edu}. Department of
    Mathematics, University of Massachusetts, Amherst, MA
    01003. Supported in part by NSF Applied Mathematics Grant Number
    DMS-0908190.}}
\maketitle

\begin{abstract}
  We investigate the vacuum in nonisentropic gas dynamics in one space
  variable, with the most general equation of states allowed by
  thermodynamics.  We recall physical constraints on the equations of
  state and give explicit and easily checkable conditions under which
  vacuums occur in the solution of the Riemann problem.  We then
  present a class of models for which the Riemann problem admits
  unique global solutions without vacuums.
\end{abstract}

2000\textit{\ Mathematical Subject Classification:} 35L65, 35B65,
35B35.

\textit{Key Words:}  Nonisentropic gas dynamics,  conservation laws,
vacuum, Large data, Riemann
problem.
%
%
\section{Introduction}

We consider the Euler equations of fluid dynamics in one space
dimension,
\begin{align}
  \rho_t+(\rho u)_x &=0,\nn\\
  (\rho u)_t+(\rho u^2+p)_x &=0,\label{euler}\\ 
  (\half\rho u^2+\rho E)_t+(u(\half\rho u^2+\rho E+p))_x&=0,\nn
\end{align} 
describing conservation of mass, momentum and energy, respectively.
Here $\rho$ is the density, $p$ is the pressure, $u$ is the velocity,
and $E$ is the specific internal energy of the fluid: the equations
are closed by specifying an equation of state or constitutive
relation, describing how the thermodynamic variables are related.
This equation of state depends on the molecular structure of the fluid
and is subject to physical constraints such as the Second Law of
Thermodynamics.

The Cauchy problem for \eq{euler} is well understood when the initial
data have small total variation~\cite{Glimm,Bressan,smoller,Dafermos},
but little is known for solutions with large data~\cite{TY,TYperStr}.
One of the main difficulties is the possible occurrence of vacuum.
Several authors have studied the vacuum in isentropic gas dynamics
(obtained by dropping the energy equation)~\cite{li tatsien, liu,
  smith, Young p system}, and more recently nonisentropic polytropic
ideal ($\G c$-law) gases~\cite{chen_jenssen, chen_young1,
  chen_young2}.

Our goal here is to describe in detail which gases admit vacuums in
the solution.  It is well known that $\G c$-law gases require vacuum
in order to solve the global Riemann problem~\cite{smoller}, whereas
an isothermal gas ($\G c=1$) does not.  Also, a better existence
theory is available for the isothermal system, due to a degenerate
wave curve structure~\cite{Nishida}, but there is no natural
$3\times3$ physically consistent analogue of the isothermal system.
In \cite{temple}, Temple considers a class of $3\times3$ constitutive
relations with simplified structure, but this system violates some
thermodynamic constraints, given in \cite{lieb,menikoff}.  Our
intention is to present a class of physically consistent constitutive
laws which do not admit vacuum, thus removing the issue of vacuum for
these equations of state, in order to focus more fully on the effects
of nonlinear wave interactions.

We begin by collecting all the physical conditions that restrict the
equation of state.  We recall the solution of the Riemann problem and
give a necessary and sufficient condition for the occurrence of
vacuums in the general solution of the Riemann problem.  By exhibiting
specific examples, we describe a class of constitutive laws which
satisfy all our physical constraints and for which the Riemann problem
does not contain a vacuum state.  We then present the simplest class of
such equations of state, namely
\[
  E = \frac{K_0\,e^{S/c_\tau}}{2\sigma-1}(\ln(\tau+1))^{1-2\sigma},
\qquad
  p=K_0\,e^{S/c_\tau}\,\frac{(\ln(\tau+1))^{-2\sigma}}{\tau+1},
\]
where $\half<\G s\leqslant1$.  Although it is implied by quoted results, we
explicitly solve the global Riemann problem for this pressure law,
without use of the vacuum state.  It is expected that this will inform
such a study of wave interactions, which is the subject of the
authors' ongoing research.

%
%

\section{Thermodynamic constraints}

To set notation, we describe the thermodynamic constraints.  The
thermodynamic properties of a fluid are embodied in the constitutive
relation $E=E(\tau,S)$, where $\tau=1/\rho$ is the specific volume and
$S$ is the specific entropy.  See~\cite{menikoff} for a physical discussion
of these constraints, and~\cite{smith} for a detailed mathematical
analysis.

First, we make the \emph{\textbf{smoothness assumption}}, 
\beq
  E=E(\tau,S)\in C^2,\com{for}\tau\in \R^+\com{and} S\in\R,
\label{Esmooth}
\eeq
which is true for most fluids.

We require the fluid to satisfy the Second Law of Thermodynamics,
which asserts that
\beq 
  dE=T\,dS-p\,d\tau,
\label{eos}
\eeq 
where $T$ is the temperature.  This in turn implies that 
\beq 
  E_S=T,\com{and} E_\tau=-p.
\label{e_s_e_tau}
\eeq

We assume the \emph{\textbf{standard thermodynamic constraints}}:
specific volume $\G t$, pressure $p$ and temperature $T$
satisfy
\[
  \tau > 0, \quad p > 0 \com{and} T \ge 0;
\]
so that by \eq{e_s_e_tau},
\[
  E_\tau<0 \com{and} E_S\ge 0.
\]
We assume ``stability of matter'', which asserts that the energy is
finite,
\[
  E_\infty=\lim_{\tau\to\infty}E <\infty,
\]
and without loss of generality we take
\beq
   E_\infty=0, \com{so that} E(\G t,S)>0
\label{e vacuum}
\eeq
for all $\G t>0$ and $S$, see~\cite{lieb}.

Next, we assume the \emph{\textbf{thermodynamic stability
constraint}} that the energy $E=E(\tau,S)$ be jointly convex,
\beq
  E_{\tau\tau}=-p_{\tau}> 0, \quad E_{SS}=T_S\geqslant 0,
\label{p tau}
\eeq
while also
\beq
  E_{\tau\tau}\cdot E_{SS}\geqslant E^2_{\tau S} \com{and}
  E_{\tau S}=-p_S\leqslant 0.
\label{E cvx}
\eeq
According to \cite{menikoff}, thermodynamic stability yields \eq{p tau}
and \eq{E cvx}, but our discussion requires only \eq{p tau}.  Equation
\eq{p tau} in turn implies that the system is strictly hyperbolic away
from vacuum.  The condition $p_S>0$ states that the material expands
upon heating at constant pressure.  We assume nonstrict inequality for
$p_S$ to include isentropic gas dynamics, for which $p_S\equiv0$.

Our final condition is an \emph{\textbf{energy condition}}, which
states that if the pressure $p(\G t,S)$ is specified, then the energy
$E$  is well defined: that is,
\[
  E(\tau,S)=\int_\tau^{\infty}p(\tau',S)\;d\tau',
\]
where we have used (\ref{e_s_e_tau}) and (\ref{e vacuum}).  That is, we
require that, for all $S$,
\beq
  \int_1^{\infty}p(\tau,S)\;d\tau=\int_0^{1}\frac{\hat{p}(\rho,S)}
   {\rho^2}\;d\rho<+\infty,
\label{energy condition}
\eeq
where $\hat p$ is defined by
\beq
  \hat p(\G r,S)\equiv p(1/\rho,S) = p(\G t,S).
\label{phat}
\eeq

The energy condition \eq{energy condition} imposes growth conditions
on $p(\G t,S)$, or equivalently restricts the pressure $\hat p$ near
vacuum, namely $\hat p(0+,S)=0$, and by l\'{}Hospital's rule, 
\beq
  \lim_{\G r\to 0} \hat p_\rho(\G r,S) = 
  \lim_{\G r\to 0} \frac{\hat{p}(\rho,S)}{\rho} = 0.
\label{p0}
\eeq

Note that our conditions alone are \emph{not} sufficient to conclude
uniqueness of solutions to the Riemann problem: uniqueness is assured
if and only if Smith's \emph{medium condition}, that is
\beq
  \frac{\partial}{\partial \tau}p(\tau,E)\le \frac{p^2}{2E},
\label{medium}
\eeq 
where $p$ is regarded as $p(\G t,E)$, is satisfied, see~\cite{smith}.

%
%
\section{Vacuum in the solution of Riemann problems}

We wish to investigate the circumstances in which a vacuum appears in
the solution to a Riemann problem.  We briefly recall the solution of
the Riemann problem; see~\cite{smoller}.

\subsection{Riemann problem}

We begin by calculating the simple (rarefaction) wave curves.  For
smooth solutions, we replace the third (energy) equation of
(\ref{euler}) by the entropy equation
\[
  S_t+uS_x=0,
\]
and use $(\G r, u, S)$ as the state variables.  It is routine to
calculate the eigensystem after writing \eq{euler} in
quasilinear form.  The eigenvalues of (\ref{euler}) are
\[ 
  \lambda_1=u-c,\quad \lambda_2=u,\quad \lambda_3=u+c,
\]
and these are the wavespeeds of the
backward, middle and forward waves, respectively, and
\beq
  c = c(\G r,S) :=\sqrt{\hat{p}_\rho}
\label{c_def}
\eeq 
is the speed of sound.  As is well known, the forward and backward
waves are genuinely nonlinear and the middle waves linearly
degenerate.   The corresponding
eigenvectors are
\[
r_1=\left (\begin{array}{l}
 \rho\\
 -c\\
 0
 \end{array}\right ),\quad
  r_2=\left (\begin{array}{l}
 \hat{p}_S\\
 0\\
 -\hat{p}_\rho
 \end{array}\right ),\quad
 r_3=\left (\begin{array}{l}
 \rho\\
 c\\
 0
\end{array}\right ).
\]
It follows that the equation of a backward simple wave is
\beq 
  u-u_l = R(\G r_l,S) - R(\G r,S), \quad S=S_l,
\label{1 simple wave}
\eeq
where the subscript $l$ refers to the left state of the wave, and we
define
\beq
  R(\G r,S) = \int_1^{\rho}\frac{c(r,S)}{r}\;dr
       = \int_1^{\rho}\frac{\sqrt{\hat p_\rho}}{r}\;dr,
\com{for} \G r \ge 0.
\label{Rdef}
\eeq 
The equation of a forward simple wave curve is 
\beq 
  u_r-u = R(\G r_r,S) - R(\G r,S), \quad S=S_r,
\label{3 simple wave}
\eeq 
where the subscript $r$ refers to the right state of the wave. 

Next we calculate the shock curves: these are described by the
Rankine-Hugoniot conditions,
\begin{align}
  \xi[\rho]&=[\rho u],\nn\\
  \xi[\rho u]&=[\rho u^2+p],\label{RH}\\ 
  \xi[\frac{1}{2}\rho u^2+\rho E]&=[u(\frac{1}{2}\rho u^2+\rho
E+p)],\nn
\end{align}
where $\xi$ is the shock speed and the brackets denote the jump in a
quantity across the shock.  We simplify these as follows: the first
equation can be written
\beq
  \xi\frac{\rho_r}{\rho_l}-\xi-\frac{\rho_r}{\rho_l}u_r+u_l=0,
\label{RH4}
\eeq
and, recalling that $\G t=1/\G r$, using this in the second equation
and simplifying yields
\[
  [p][\tau]=\xi[\rho u][\tau]-[\rho u^2][\tau]=-[u]^2.
\]
Next, denoting the average of a quantity by
$\overline{g}=\frac{g_l+g_r}{2}$, manipulating the first two equations
of \eq{RH} yields
\beq
  \xi\overline{\rho}[u]=\overline{\rho u}[u]+[p],
\com{so}
  \xi\overline{\rho}-\overline{\rho u}=\frac{[p]}{[u]}.
\label{RH8}
\eeq
  
The third equation of \eq{RH} gives, after simplifying,
\[
  \xi\overline{\rho}[\frac{1}{2}u^2+E]=
  \overline{\rho u} [\frac{1}{2}u^2+E]+[up].
\]
Using (\ref{RH8}) and again simplifying, we finally obtain
\beq
  [E]+\overline{p}[\tau]=0,\label{HC}
\eeq 
which is the \emph{Hugoniot curve} for shocks.  We conclude that
this describes the shock curve fully: first, solve \eq{HC} to find the
relation between $\G r$ and $S$, then use 
\beq
  [u] = - \sqrt{-[p]\,[\G t]},
\label{ueq}
\eeq
obtained from the entropy condition~\cite{lax,courant}, to resolve
$u$, and finally use \eq{RH8} to determine $\xi$.

Recall that an entropy condition is required to choose admissible
shocks and thus obtain uniqueness of Riemann
solutions~\cite{courant,lax}.  This condition states that pressure
(and thus also density) is bigger behind the shock, and leads to the
negative square root in \eq{ueq} above.  It follows similarly that the
density behind a (forward or backward) rarefaction wave is
\emph{smaller} than the density ahead of the wave.  Also, this implies
that the state behind a shock \emph{cannot} be the vacuum state.

It is routine to describe contact discontinuities using \eq{RH}:
namely, substitute $[u]=0$ in directly, to obtain
\beq
   [u] = 0,\quad [p]=0 \com{and} \xi = \overline u = u.
\label{contact}
\eeq

If a contact discontinuity is adjacent to a vacuum, then we combine
the contact discontinuity and the vacuum region into a new vacuum
region, called a {\em{non-isentropic vacuum}} on which the entropy
density $\G rS$ vanishes.  The left and right hand limits of $S$ on
the left and right boundaries of such a vacuum region are different.
It follows that, if the vacuum is involved in the solution of the
Riemann problem, it can only be generated between two outgoing
rarefaction waves, see~\cite{liu,li tatsien,Young p system}

\subsection{Vacuum condition}

\begin{lemma}
\label{lem:R}
The vacuum state exists in the solution of Riemann problems of
(\ref{euler})  if and only if, for some $S$,
\beq
  R(0+,S) > - \infty,
\label{vacuum condition}
\eeq
where $R$ is defined in \eq{Rdef}.
\end{lemma}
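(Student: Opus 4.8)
The plan is to exploit the structural fact, established just above the statement, that a vacuum can appear in a Riemann solution only between two outgoing rarefaction waves: a backward $1$-rarefaction emanating from the left state $U_L=(\rho_L,u_L,S_L)$ and a forward $3$-rarefaction emanating from the right state $U_R=(\rho_R,u_R,S_R)$. The whole question then reduces to deciding when these two rarefaction curves can be joined through an intermediate contact discontinuity at positive pressure, and I would carry out the analysis in the $(u,p)$ plane, where the contact condition \eqref{contact} simply requires the middle velocity and pressure to match across the $2$-wave.

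First I would read off the limiting velocities along the two rarefaction curves as the density drops to vacuum. Using \eqref{1 simple wave} with $S=S_L$ and letting $\rho\to0+$, the backward rarefaction through $U_L$ attains the supremal velocity
\[
  u_L^{\max}=u_L+R(\rho_L,S_L)-R(0+,S_L),
\]
and likewise from \eqref{3 simple wave} with $S=S_R$ the forward rarefaction through $U_R$ attains the infimal velocity
\[
  u_R^{\min}=u_R-R(\rho_R,S_R)+R(0+,S_R).
\]
Since $R(\cdot,S)$ is finite and strictly increasing for $\rho>0$ by \eqref{Rdef} (because $c=\sqrt{\hat p_\rho}>0$), these limits are finite exactly when $R(0+,S_L)>-\infty$ and $R(0+,S_R)>-\infty$ respectively, and otherwise $u_L^{\max}=+\infty$ or $u_R^{\min}=-\infty$. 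Note that $p\to0$ along a rarefaction forces $\rho\to0$ by \eqref{p0} and $\hat p_\rho>0$, so the endpoints of the rarefaction branches genuinely sit at the vacuum.

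Next I would set up the solvability criterion. Parametrising the backward wave curve through $U_L$ by the middle pressure $p_*$ gives a velocity $u_*=\Phi_1(p_*)$ that is strictly decreasing, with $\Phi_1(0+)=u_L^{\max}$ and $\Phi_1(p_*)\to-\infty$ as $p_*\to\infty$ along the shock branch; similarly the forward curve through $U_R$ gives $u_*=\Phi_3(p_*)$ strictly increasing, with $\Phi_3(0+)=u_R^{\min}$ and $\Phi_3(p_*)\to+\infty$. The monotonicity on the rarefaction branches is exactly the sign computation above, and on the shock branches it follows from the Hugoniot relation \eqref{HC} together with \eqref{ueq}. Hence $G(p_*):=\Phi_1(p_*)-\Phi_3(p_*)$ is strictly decreasing, with $G(\infty)=-\infty$ and $G(0+)=u_L^{\max}-u_R^{\min}$, so a vacuum-free middle state (a root $p_*>0$) exists if and only if $u_L^{\max}>u_R^{\min}$, equivalently
\[
  u_R-u_L<\bigl[R(\rho_L,S_L)-R(0+,S_L)\bigr]+\bigl[R(\rho_R,S_R)-R(0+,S_R)\bigr].
\]
When this fails the construction collapses to $p_*=0$, which is precisely a vacuum between the two rarefactions.

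It then remains to convert this into the stated condition. For necessity I would argue contrapositively: if $R(0+,S)=-\infty$ for every $S$, then $u_L^{\max}=+\infty$ and $u_R^{\min}=-\infty$ for all data, so the inequality above always holds and no vacuum can occur. For sufficiency, given some $S$ with $R(0+,S)>-\infty$, I would take $S_L=S_R=S$ and $\rho_L=\rho_R=\rho_0$, so that the right-hand side equals the finite number $2\int_0^{\rho_0}\tfrac{\sqrt{\hat p_\rho}}{r}\,dr$, and then choose $u_L,u_R$ with $u_R-u_L$ exceeding it; this exhibits explicit Riemann data whose solution contains a vacuum. The main obstacle I anticipate is the rigorous justification of the global monotonicity and the limiting behaviour of the shock branches of the wave curves, namely that $\Phi_1,\Phi_3$ are monotone all the way along and that the shock branches run off to $\mp\infty$, so that the only obstruction to intersection is the vacuum endpoint; the rarefaction computation is immediate, but the shock side relies on the full Hugoniot analysis under our general equation of state.
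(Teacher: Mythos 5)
Your sufficiency direction is essentially the paper's own argument: the same explicit data with $S_L=S_R=S$, equal densities, and $u_R-u_L$ exceeding $2\bigl[R(\rho_0,S)-R(0+,S)\bigr]$, so that no intermediate density closes the two rarefactions and a vacuum is forced. The gap is in your necessity direction. You run it contrapositively through the global wave-curve picture in the $(u,p)$ plane: $\Phi_1$ strictly decreasing and $\Phi_3$ strictly increasing along their entire lengths (shock branches included), shock branches with range running off to $\mp\infty$, and then the inference that an intersection at $p_*>0$ precludes any vacuum solution. Under the general constitutive assumptions of Section 2 none of this is available: the paper explicitly notes that its conditions alone do \emph{not} give uniqueness of Riemann solutions, which holds if and only if Smith's medium condition \eq{medium} is satisfied --- and global monotonicity of the shock branch of the wave curve is precisely the structure that condition is needed to secure. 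So the step you flagged as ``the main obstacle'' is not a deferred technicality; it does not follow from the hypotheses in force here. Moreover, even granting the monotonicity, ``a vacuum-free intersection exists for all data'' does not by itself rule out a second, vacuum-containing admissible solution for the same data, absent a uniqueness theorem.

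The repair --- and the paper's actual argument --- avoids shock curves entirely by arguing directly on a solution that contains a vacuum: the vacuum sits between a backward and a forward rarefaction, and $u$ is monotone increasing in $x$ across such a solution, so parametrizing the forward rarefaction by $\rho$, equation \eq{3 simple wave} gives $R(\rho,S_r)=u-u_r+R(\rho_r,S_r)\ge u_l-u_r+R(\rho_r,S_r)$ uniformly in $\rho$; letting $\rho\to0$ yields $R(0+,S_r)>-\infty$ directly, with no contrapositive and no global wave-curve analysis. Your computation of the limiting velocities $u_L^{\max}$ and $u_R^{\min}$ identifies exactly the right quantity; you need only apply it to the given vacuum solution rather than to the solvability of the vacuum-free middle state.
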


\begin{proof}
We first prove that if the vacuum state exists in the solutions of
Riemann problems then \eq{vacuum condition} is satisfied.  Recall that
vacuum state only appears between two rarefaction waves.  If a Riemann
solution consists of forward and backward rarefactions, it follows
that the velocity $u$ is monotone increasing as a function of
$x$~\cite{courant}.  Parameterizing the forward wave by $\rho$, it
follows from \eq{3 simple wave} that
\[
  u_r - u = R(\rho_r,S_r) - R(\rho,S_r);
\]
now since $u\ge u_l$, we get the uniform bound
\[
  R(\rho,S_r) \ge u_l - u_r + R(\rho_r,S_r),
\]
and allowing $\rho\to0$ implies \eq{vacuum condition}.

Now suppose (\ref{vacuum condition}) holds for some $S$.  We claim
that the Riemann problem with data $U_l = (1,0,S)$, and $U_r =
(1,u_r,S)$ has a vacuum in the solution whenever
\[
  u_r > - 2\,R(0+,S).
\]
To see this, assume $u_r>0$ and resolve the Riemann problem into
backward and forward rarefactions using \eq{1 simple wave} and \eq{3
  simple wave}, to get 
\begin{align*}
  u_m &= R(1,S)-R(\rho_m,S) = -R(\rho_m,S) \com{and}\\
  u_r - u_m &= R(1,S)-R(\rho_m,S) = -R(\rho_m,S),
\end{align*}
with no contact as $S_r=S_l=S$.  Adding, we must solve
\[
   u_r = -2\,R(\rho_m,S),
\]
and so if $u_r > - 2\,R(0+,S)$, no such $\rho_m$ can be found and a
vacuum is required to solve the Riemann problem; see
also~\cite{smoller,Young p system}. 
\end{proof}

We now introduce an easily checkable condition which implies
\eq{vacuum condition}, so is a sufficient condition for existence of
Riemann solutions with vacuum.  This \textbf{pressure near vacuum
  condition} describes the rate at which $\hat p \to 0$: for some
value of $S$, there exist positive numbers $\varepsilon_0$,
$\alpha_0$ and $M_0$, such that,
\beq
  \hat{p}_\rho(\rho,S)\leqslant M_0\rho^{\alpha_0}
   \com{whenever}  \rho\in(0,\varepsilon_0).
\label{pvc}
\eeq
Note that we require this condition at only one $S$: by continuity, we
would generally expect the condition to hold in an open set of $S$
values.  Note also that polytropic ideal gases satisfy \eq{pvc}, $\G
a_0$ being given by the adiabatic exponent $\gamma>0$.

\begin{theorem}
The energy condition \eq{energy condition} and pressure near vacuum
condition \eq{pvc} together imply that vacuums exist in the solution
of some Riemann problems.
\end{theorem}

\begin{proof}
According to \eq{p0}, the energy condition implies that 
$\hat p_\rho\to 0$ as $\G r\to0$, so \eq{pvc} makes sense.  It then
suffices by Lemma~\ref{lem:R} to show that, for $S$ given by \eq{pvc},
equation \eq{vacuum condition} is satisfied.  From \eq{Rdef}, for
$\rho<\varepsilon_0$, we have
\begin{align*}
   R(\varepsilon_0,S) - R(\rho,S) &=
    \int_\rho^{\varepsilon_0}\frac{\sqrt{\hat{p}_\rho(r,S)}}{r}\;dr\\
   &\leqslant \int_\rho^{\varepsilon_0} \sqrt{M_0}r^{-1+\alpha_0/2}\;dr\\
   &\leqslant \sqrt{M_0}\,\frac2{\alpha_0}\,\varepsilon_0^{\alpha_0/2},
\end{align*}
and taking the limit $\rho\to0$ gives the required lower bound for
$R(0+,S)$.
\end{proof}

%
%
\section{Gas dynamics without vacuum}

We now write down a class of constitutive laws for gases which
\emph{do not} admit vacuums in the solution of the Riemann problem.
These gases satisfy all the constraints of Section 2, but do not
satisfy the vacuum condition \eq{vacuum condition}.  By \eq{pvc},
there is a vacuum for any $\G c$-law gas ($p\sim \G r^{\G c}$) with
$\G c>1$, but no vacuum for an isothermal gas ($\G c=1$).  We thus
look for presssure laws between these cases, which restricts our
equation of state.

It is convenient to work with separable energies, that is, energies of
the form $E(\rho,S)=f(S)\,g(\rho)$.  Specifically, we consider
energies given by the expression
\beq
  E(\tau,S)=f(S)\int_{\ln(\tau+1)}^{+\infty} k^2(y)\;dy,
\label{example_e}
\eeq  
where $f$ and $k$ are $C^2$ functions satisfying certain conditions
described below.

We assume that $f(S)$ is $C^2$, positive, increasing and convex,
\beq
  f(S)>0,\quad f'(S)>0,\quad f''(S)\ge 0,
\label{fp}
\eeq
with limits
\beq
  \lim_{S\rightarrow\infty}f(S)=\infty \com{and}
  \lim_{S\rightarrow-\infty}f(S)=0.
\label{fl}
\eeq
We assume $k(y)$ is a $C^2$, convex, decreasing function defined on
$(0,\infty)$,
\beq
k(y)>0,\quad
k'(y)<0,\quad 
k''(y)>0,
\label{kp}
\eeq
subject to the growth limits
\beq
  \int_{1}^{+\infty} k^2(y)\;dy<+\infty \com{and}
  \int_{1}^{+\infty} k(y)\;dy={+\infty},
\label{k}
\eeq
describing growth near vacuum, $\tau\to\infty$, and limits
\beq
  k(y) \to \infty \com{and} \frac{k'(y)}{k(y)} \to -\infty
  \com{as} y\to 0,
\label{kto0}
\eeq
describing infinite density.

\begin{theorem}
\label{thm:uniq}
The above conditions imply that for gases having energy
\eq{example_e}, the Riemann problem has a unique global solution which
does not admit the vacuum state.  Moreover, such gases satisfy all
constraints of Section 2 with the exception of \eq{E cvx}.  
\end{theorem}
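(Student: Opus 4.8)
The plan is to verify each physical constraint from Section~2 in turn for the separable energy \eq{example_e}, and then to establish the two analytic claims: absence of vacuum (via Lemma~\ref{lem:R}) and global unique solvability (via Smith's medium condition \eq{medium}). First I would compute the basic thermodynamic quantities directly from \eq{example_e}. Writing $G(\tau)=\int_{\ln(\tau+1)}^\infty k^2(y)\,dy$ so that $E=f(S)\,G(\tau)$, differentiation gives $E_S=f'(S)\,G(\tau)$ and $E_\tau=-f(S)\,k^2(\ln(\tau+1))/(\tau+1)$, whence
\beq
  p = -E_\tau = f(S)\,\frac{k^2(\ln(\tau+1))}{\tau+1}.
\label{plan:p}
\eeq
From here the standard constraints fall out mechanically: positivity of $p$ and $T=E_S$ follows from \eq{fp} and \eq{kp}; the stability-of-matter normalization $E_\infty=0$ with $E>0$ follows from $G(\infty)=0$ together with positivity of $f$ and $k^2$; and the convexity condition $E_{\tau\tau}=-p_\tau>0$ reduces after differentiating \eq{plan:p} to a statement about $k$ and $k'$ near the relevant argument, which I expect to follow from $k'<0$, $k''>0$ in \eq{kp}. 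The energy condition \eq{energy condition} is essentially built into the definition, since $\int_1^\infty p\,d\tau = f(S)\int_1^\infty k^2(\ln(\tau+1))/(\tau+1)\,d\tau = f(S)\int_{\ln 2}^\infty k^2(y)\,dy$ under the substitution $y=\ln(\tau+1)$, and this is finite by the first limit in \eq{k}.

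Next I would address the no-vacuum claim, which is the crux of the construction. By Lemma~\ref{lem:R}, it suffices to show $R(0+,S)=-\infty$ for every $S$, equivalently that the sound-speed integral \eq{Rdef} diverges as $\rho\to 0$. The key observation is the same change of variables: near vacuum ($\tau\to\infty$, i.e.\ $\rho\to 0$) the sound speed $c=\sqrt{\hat p_\rho}$ and the measure $d\rho/\rho$ should combine so that, after converting to the $y=\ln(\tau+1)$ variable, the integrand is controlled from below by $k(y)$, and then the divergence $\int^\infty k\,dy=\infty$ from the second limit in \eq{k} forces $R(0+,S)=-\infty$. I would make this precise by expressing $R$ in terms of $\tau$ and then $y$; the natural guess is that $dR = \sqrt{\hat p_\rho}\,d\rho/\rho$ transforms (up to lower-order factors) into something comparable to $-k(y)\,dy$, so the logarithmic-type divergence of $\int k$ is exactly what distinguishes these laws from the $\gamma>1$ case and matches the isothermal borderline $\gamma=1$.

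For the global uniqueness claim I would verify Smith's medium condition \eq{medium}, rewriting $p$ as a function of $(\tau,E)$. Since the energy is separable, $f(S)=E/G(\tau)$, so $p=E\,k^2(\ln(\tau+1))/\big((\tau+1)\,G(\tau)\big)$; substituting this into \eq{medium} should reduce the inequality to a pointwise condition on $k$ and $G$ alone, independent of $E$, which I expect the convexity and monotonicity hypotheses \eq{kp} to deliver, possibly with the restriction linking them to the other limits. Finally I would confirm the stated exception: the theorem asserts all Section~2 constraints hold \emph{except} \eq{E cvx}, so I would exhibit that $E_{\tau S}=-p_S$ need not be $\le 0$ here, or that the discriminant inequality $E_{\tau\tau}E_{SS}\ge E_{\tau S}^2$ can fail, tracing this to the convexity $f''\ge 0$ interacting with the $\tau$-dependence. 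The main obstacle I anticipate is the no-vacuum step: establishing the precise lower bound on the $R$-integrand so that the divergent integral $\int^\infty k\,dy=\infty$ transfers cleanly to $R(0+,S)=-\infty$, since this requires controlling $\hat p_\rho$ near vacuum rather than $\hat p$ itself, and the conversion between $\rho$, $\tau$ and $y$ derivatives is where sign and growth conditions must be combined carefully.
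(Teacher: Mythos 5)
Your route coincides with the paper's own proof: the same formula $p=f(S)\,k^2(\ln z)/z$ with $z=\tau+1$, the same substitution $y=\ln(\tau+1)$ for the energy condition, and for the vacuum the paper computes exactly the bound you guess at, namely $\hat p_\rho=-p_\tau\,\tau^2$ and
$R(\rho,S)=-\sqrt{f(S)}\int_{\ln 2}^{\ln(1+1/\rho)}\sqrt{k^2(w)-2k(w)k'(w)}\;dw
\le -\sqrt{f(S)}\int_{\ln 2}^{\ln(1+1/\rho)}k(w)\;dw\to-\infty$.
The step you flag as the ``main obstacle'' is in fact immediate: since $k>0$ and $k'<0$, the integrand dominates $k(w)$ pointwise with no lower-order losses, and the second condition in \eq{k} finishes via Lemma~\ref{lem:R}. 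The genuine gap is in the uniqueness step. Citing Smith's medium condition is not enough: as the paper points out, Smith's theorem carries the additional hypothesis \eq{e limte tau 0}, $\lim_{\tau\to0^+}E(p,\tau)=0$, and verifying it is precisely where the hypotheses \eq{kto0} enter --- conditions your proposal never invokes, which signals the omission. Concretely, separability gives $E(p,\tau)=p\,z\int_{\ln z}^\infty k^2(y)\,dy\,/\,k^2(\ln z)$, and as $\tau\to0$ (so $\ln z\to 0$) one must show $\int_\varepsilon^{1}k^2(y)\,dy\,/\,k^2(\varepsilon)\to0$, which follows from $k\to\infty$ and $k'/k\to-\infty$ by l'Hospital. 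Moreover, your ``I expect the convexity and monotonicity hypotheses to deliver'' the medium condition hides the real content: after substituting $p(\tau,E)=E\,k^2(\ln z)/\bigl(z\int_{\ln z}^\infty k^2(y)\,dy\bigr)$, the inequality \eq{medium} reduces to $N(w):=\frac{1}{2}k^3(w)+\bigl(2k'(w)-k(w)\bigr)\int_w^\infty k^2(y)\,dy<0$, which is not termwise obvious; the paper proves it by checking $N'(w)>0$ (every term positive by \eq{kp}) and $N(w)\to0$ as $w\to\infty$, so that $N<0$ everywhere. Your expectation that only \eq{kp} is needed is correct, but as written this step is asserted, not proved.

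A second, smaller defect concerns the ``exception'' clause: one of your two proposed routes would fail. Here $E_{\tau S}=-p_S=-f'(S)\,k^2(\ln z)/z<0$ by \eq{fp}, so the sign half of \eq{E cvx} \emph{always holds} for these gases; it is only the discriminant inequality $E_{\tau\tau}E_{SS}\ge E_{\tau S}^2$ that the hypotheses do not guarantee. Note also that the theorem claims only that \eq{E cvx} is not among the verified constraints; the paper does not exhibit failure, but instead records sufficient extra conditions \eq{flc} and \eq{kcvx} that restore it. If you do want an explicit failure, target the discriminant: any admissible $f$ with $f''=0$ on an interval gives $E_{SS}=f''(S)\int_{\ln z}^\infty k^2(y)\,dy=0$ there while $E_{\tau S}<0$, so the inequality fails.
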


\begin{proof}
By \eq{example_e} and \eq{e_s_e_tau}, we have
\beq
  p(\tau,S)=\frac{1}{z}k^2(\ln z)f(S),
\label{pts}
\eeq
where we have set $z=\tau+1$.  It is easy to check that the standard
thermodynamic constraints, stability of matter, and thermodynamic
stability \eq{p tau} are satisfied.  The energy condition follows from
the first equation in \eq{k}.

We now check that the vacuum condition \eq{vacuum condition} fails for
this equation of state.  We calculate
\[
  p_\tau(\tau,S)=-\frac{k^2(\ln z)-2k(\ln z)k'(\ln z)}{z^2}f(S)<0,
\]
and, since $\rho=1/\tau$, we write
\[
  \hat p_\rho(\rho,S) = -p_\tau(\tau,S)\,\tau^2,
\]
and so, by \eq{Rdef},
\begin{align}
  R(\rho,S) &= -\int_{1}^{1/\rho}\sqrt{-p_\tau(\tau,S)}\;d\tau \nn\\
  &= -\sqrt{f(S)}\;\int_{\ln2}^{\ln(1+1/\rho)} \sqrt{k^2(w)-2k(w)k'(w)}\;dw,
\label{R}
\end{align}
where $w = \ln z=\ln(1+\tau)$.  It now follows from \eq{kp}, \eq{k} that
\[
  R(\rho,S) < -\sqrt{f(S)}\;\int_{\ln2}^{\ln(1+1/\rho)} k(w)\;dw \to -\infty
\]
as $\rho\to0$ for all $S$, so that \eq{vacuum condition} is never
satisfied.

Existence and uniqueness of Riemann solutions with arbitrary data now
follows from Smith's medium condition \eq{medium}, see~\cite{smith}.
Smith has an extra assumption, namely, he requires
\beq
  \lim_{\tau\rightarrow0^+}E(p,\tau)=0,\label{e limte tau 0}
\eeq
when $E$ is described as $E=E(p,\tau)$.  Because our energy is
separable, we have
\beq
  E(p,\tau)=p\frac{z\int_{\ln z}^{\infty}k^2(y)dy}{k^2(\ln z)}.
\label{Ep}
\eeq
Since $z\to1$ as $\tau\to0$, for small $\tau$, we write
\[
  E(p,\tau)=p\,z\,
  \frac{\int_{\ln z}^{1}k^2(y)dy+\int_{1}^{\infty}k^2(y)dy}{k^2(\ln z)}.
\]
By our assumptions on $k$, \eq{e limte tau 0} follows if we show
that the limit
\[
   \frac{\int_{\varepsilon}^{1}k^2(y)dy}{k^2(\varepsilon)} \to 0
  \com{as}\varepsilon\to0.
\]
This in turn follows from \eq{kto0} and l\'{}Hospital's rule.

To check the medium condition \eq{medium}, we use \eq{Ep} to write
\[
  p(\tau,E)=E\;\frac{k^2(\ln z)}{z\int_{\ln z}^\infty k^2(y)\;dy},
\]
where $z=\tau+1$.  The medium condition \eq{medium} is
\[
  \frac{2k(\ln z )k'(\ln z )\int_{\ln
 z }^\infty{k^2(y)\;dy}-k^2(\ln z )\int_{\ln
 z }^\infty{k^2(y)\;dy}+k^4(\ln  z )}{( z \int_{\ln
 z }^\infty{k^2(y)\;dy})^2}E<\frac{p^2}{2E},
\]
which simplifies to
\[
  N(z) := \frac{1}{2}k^3(z)+2k'(z)\int_z^\infty{k^2(y)\;dy}
  -k(z)\int_z^\infty{k^2(y)\;dy}<0,
\]
which must hold for all $z>1$.

By \eq{k}, $k(z)\to 0$ as $z\to\infty$, which in turn implies that
$N(z)\to0$ as $z\to\infty$. Furthermore, by (\ref{kp}), 
\[
  N'(z) = -\frac{1}{2}k'k^2+2k''\int_z^\infty{k^2}-k'\int_z^\infty{k^2}+k^3>0,
\]
so $N$ is increasing with limit 0, and thus $N(z)<0$ for all $z$.
\end{proof}

We remark that our conditions do not suffice to prove the convexity of
$E(\tau,S)$, namely the first condition of \eq{E cvx},
\[
  E_{\tau\tau}\cdot E_{SS}\geqslant E^2_{\tau S}.
\]
This condition is easily seen to be implied by the dual assumptions
that $f$ is log-convex,
\beq
   f(S)\,f{''}(S)\geqslant {f{'}}^2(S),
\label{flc}
\eeq
and $k$ satisfies the condition
\beq
  \big( k(z)- 2 k'(z)\big)\int_{z}^{\infty} k^2(y)\;dy\ge k^3(z).
\label{kcvx}
\eeq
for all $z$.

%
%
\section{Concrete example}

We now present the simplest equation of state which does not allow for
a vacuum in the solution of the Riemann problem.  Ideal polytropic
($\gamma$-law) gases with adiabatic constant $\G c>1$ admit vacuums,
while an isothermal gas ($\G c=1$) does not, but the isothermal gas
does not satisfy the finite energy stability of matter condition.  We
thus consider equations of state that fall between these two cases.
Since \eq{pvc} implies existence of vacuums, this restricts the
equation of state to those for which \eq{pvc} fails.

This class of gases we present here are polytropic, satisfying
\[
  E=c_\tau\;T,
\]
but do not satisfy the ideal gas law $p\,\tau = R\,T$.  For a
polytropic gas of the form \eq{example_e}, \eq{e_s_e_tau} implies that
we must have
\[
  f(S) = K_0\,e^{S/c_\tau},
\]
and this trivally satisfies conditions \eq{fp}, \eq{fl} and \eq{flc}.

We choose $k=k(z)$ as simple as possible so that properties \eq{kp}
and \eq{k} hold, namely
\beq
  k(z) = z^{-\G s}, \com{for some}  \G s\in(\half,1].
\label{kpower}
\eeq
It is then easy to check that \eq{kp}, \eq{k}, \eq{kto0} and \eq{kcvx}
hold.

With these choices, \eq{example_e} becomes
\beq
   E=\frac{K_0\,e^{S/c_\tau}}{2\sigma-1}(\ln(\tau+1))^{1-2\sigma},
\label{e example}
\eeq  
and \eq{pts} becomes
\beq
  p=\frac{K_0\,e^{S/c_\tau}}{\tau+1}(\ln(\tau+1))^{-2\sigma}.
\label{p example}
\eeq 

It follows from Theorem~\ref{thm:uniq} that the Riemann problem has a
unique solution without vacuum.  In fact, Smith's strong condition
holds, namely
\[
  \frac{\partial}{\partial{\tau}}E(\tau,p)>0.
\]
This is easily seen by eliminating $e^{S/c_\tau}$ from \eq{e example}
and \eq{p example}, to get
\[
  E(\tau,p) = p\;\frac{(\tau+1)\,\ln(\tau+1)}{2\sigma-1},
\]
and differentiating.

Although we have an abstract proof of existence and uniqueness
of Riemann solutions, we find it instructive to prove this directly.

\begin{lemma}
There is a unique solution to the Riemann problem for \eq{euler} with
equation of state \eq{e example}, which does \emph{not} include the
vacuum, for arbitrary Riemann data.
\end{lemma}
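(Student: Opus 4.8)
The plan is to solve the Riemann problem explicitly by constructing the global wave curves for the backward and forward families and showing they intersect uniquely for any Riemann data, so that no vacuum state is ever needed. Since the general abstract result (Theorem~\ref{thm:uniq}) already guarantees existence and uniqueness via Smith's medium condition, the purpose of this direct proof is to exhibit the concrete structure of the wave curves for the pressure law \eq{p example}. First I would set up the state space using $(\rho,u,S)$ and recall from Section~3 that any Riemann solution consists of a backward wave (1-family), a middle contact discontinuity (2-family), and a forward wave (3-family), with the contact carrying only a jump in $S$ at constant $u$ and $p$. The solution is thus reduced to finding an intermediate pressure $p_m$ (equivalently a velocity $u_m$) at which the backward wave curve through $U_l$ and the forward wave curve through $U_r$ meet.

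Next I would parameterize each wave curve by pressure and establish its monotonicity and range. For rarefactions the relevant quantity is $R(\rho,S)$ from \eq{Rdef}, which by the computation \eq{R} in the proof of Theorem~\ref{thm:uniq} satisfies $R(0+,S)=-\infty$ for the choice \eq{kpower}; this is precisely the statement that the velocity along a rarefaction decreases without bound as $\rho\to0$, so no vacuum interrupts the curve. For shocks I would use the Hugoniot relation \eq{HC} together with \eq{ueq}, substituting \eq{e example} and \eq{p example}; the admissible (entropy-satisfying) branch corresponds to increasing pressure and density behind the shock, and along it $u$ again varies monotonically. Combining the shock and rarefaction halves, each full wave curve $u=W_{l}(p)$ (backward) and $u=W_{r}(p)$ (forward) is a continuous, strictly monotone function of $p$: $W_l$ strictly decreasing and $W_r$ strictly increasing in $p$, or vice versa depending on orientation. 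The key is that as $p\to0^+$ the rarefaction branch drives $u\to\mp\infty$ (by $R(0+,S)=-\infty$), and as $p\to\infty$ the shock branch also drives $u$ monotonically to $\pm\infty$, so each wave curve sweeps all of $\R$.

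The unique intersection then follows from a standard monotonicity-and-range argument: I would define the function $\Phi(p) = W_l(p) - W_r(p)$, show it is continuous and strictly monotone on $(0,\infty)$, and verify the limits $\Phi(0^+)=-\infty$ and $\Phi(\infty)=+\infty$ (or the reverse), so that by the intermediate value theorem there is exactly one $p_m>0$ with $\Phi(p_m)=0$. Because $p_m>0$ is attained at finite pressure, the intermediate state $(\rho_m,u_m,S_{l})$ and $(\rho_m',u_m,S_r)$ are genuine non-vacuum states, separated by the contact; this is exactly the content of the lemma. I expect the main obstacle to be the shock-branch analysis: verifying that the Hugoniot locus through a given state, when expressed via \eq{HC} with the explicit energy \eq{e example}, is globally well-defined, single-valued, and monotone in $p$ all the way to $p\to\infty$. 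This requires confirming that $-[p]\,[\tau]>0$ along the admissible branch (so that \eq{ueq} yields a real velocity) and that $[\tau]\to -1^+$ (equivalently $\rho\to\infty$) cannot occur at finite pressure, which follows from the behavior \eq{kto0} encoding infinite density; the rarefaction half is comparatively routine once $R(0+,S)=-\infty$ is in hand.
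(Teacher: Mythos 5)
Your overall strategy---project the composite backward and forward wave curves to the $(p,u)$-plane, show each is continuous and strictly monotone, get the range $u\to\pm\infty$ as $p\to0^+$ from $R(0+,S)=-\infty$ and as $p\to\infty$ from the shock branch, and intersect by the intermediate value theorem at a finite $p_m>0$---is essentially the paper's argument in different clothing. The paper works in the adapted variables $\phi=\ln(\tau+1)$, $h$, $m$, and solves the pair \eq{Geq}--\eq{Feq} by first inverting the pressure-matching relation \eq{Feq} to obtain $h_1=\Theta(h_2)$ and then using monotonicity and surjectivity of $G$; since \eq{Feq} is exactly the statement $p_1=p_2$ across the contact (because $p=K_0e^{-\phi}m^2$), this nested solve is your $(p,u)$-intersection, and the limits \eq{FGlim}, \eq{FGlim2} are your range conditions.

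There is, however, a genuine gap at exactly the step you defer as ``the main obstacle,'' and your proposed route through it would fail. You argue that the shock branch reaches $p\to\infty$ because infinite density cannot occur at finite pressure, invoking \eq{kto0}. That is not the mechanism here: for this equation of state the Hugoniot through a state with $\phi_a$ is defined only for $\phi_b\in(\varphi(\phi_a),\phi_a)$, where $\varphi(\phi_a)>0$ is the unique zero of the denominator $qd$ in \eq{f def}. The compression ratio is thus \emph{bounded} (as for $\gamma$-law gases), the density behind the shock never blows up, and $p\to\infty$ occurs at finite $\tau_b$ because the entropy factor $m_b=m_a\,f(\phi_a,\phi_b)\to\infty$ as $\phi_b\to\varphi(\phi_a)^+$, whence $[u]\to-\infty$ by \eq{g varphi}. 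Establishing this---positivity of the radicand in \eq{f def} precisely on $(\varphi(\phi_a),\phi_a)$ via the sign and monotonicity analysis of $qn$ and $qd$, the asymptotics \eq{f varphi} and \eq{g varphi}, and the monotonicity of $f$ and $g$ in each argument---constitutes the bulk of the paper's proof, and without it your functions $W_l$, $W_r$ have no proof of being single-valued, strictly monotone, or surjective on the shock side, so $\Phi(p)=W_l(p)-W_r(p)$ is not yet a legitimate object. A minor further slip: the state left of the contact is not $(\rho_m,u_m,S_l)$ when the backward wave is a shock, since entropy jumps across shocks; the intermediate states should be described via the wave curves through $U_l$ and $U_r$ rather than at fixed $S_l$, $S_r$, though this does not affect the scheme.
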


\begin{proof}
Using \eq{R} and simplifying, we have
\beq
  R(\tau,S) = - \sqrt{K_0}\;e^{S/2c_\tau}\;
    \int_{\ln2}^{\ln(\tau+1)}w^{-\sigma}\,
	\sqrt{1+\TS{\frac{2\sigma}{w}}}\;dw.
\label{Rt}
\eeq

Following \cite{chen}, we now make convenient changes of variables.
First, set
\[
  \phi:=\ln(\tau+1)>0,\com{so that} \tau+1=e^{\phi},
\]
and define
\[
  h := e^{-S/2c_\tau}\;R(\tau,S) 
    = -\sqrt{K_0}\;\int_{\ln2}^\phi w^{-\sigma}\,
	\sqrt{1+\TS{\frac{2\sigma}{w}}}\;dw.
\]
It is clear that $\rho$, $\tau$, $\phi$ and $h$ are equivalent
coordinates, with $\phi$ and $\tau$ decreasing and $h$ increasing as
functions of $\rho$, and since $\sigma\in(\frac12,1]$, we have
\beq
  \lim_{\phi\to\infty}h = - \infty \com{and}
  \lim_{\phi\to 0}h = \infty.
\label{philim}
\eeq
Next, define
\[
m := {\phi}^{-\sigma}e^{S/2c_\tau}
   = {(\ln(\tau+1))}^{-\sigma}e^{S/2c_\tau},
\]
so that $(\phi,m)$ can be used in place of $(\tau,S)$ or $(\rho,S)$.

It follows from \eq{e example} and \eq{p example} that
\beq 
  E = \frac{K_0}{2\sigma-1}\,\phi\,m^2  \com{and}
  p = K_0\, e^{-\phi}\,m^2,
\label{Epm}
\eeq
while also
\[
  e^{S/2c_\tau} = m\,\phi^\sigma \com{and}
  R = h\,m\,\phi^\sigma.
\]

The simple wave curves \eq{1 simple wave}, \eq{3 simple wave} are
described by
\beq
  u_r-u_l=(h_a-h_b)m_a \phi_a^{\sigma}, \qquad
  \frac{m_b}{m_a}=\frac{\phi_a^{\sigma}}{\phi_b^{\sigma}},
\label{simple}
\eeq
where the subscripts denote the behind, ahead, right and left states
respectively, and $\phi = \phi(h)$.  For rarefaction
waves, the sound speed $c$ decreases from front to back, so these are
characterized by $h_a>h_b$.  Similarly, by \eq{contact}, a contact
discontinuity is described by
\beq 
  u_r=u_l,  \qquad
  \frac{m_r}{m_l}=e^{(\phi_r-\phi_l)/2}.
\label{mcontact}
\eeq

It remains to calculate the shock curves.  Using \eq{Epm} in
(\ref{HC}), we get
\[
  \frac{K_0}{2\sigma-1}({\phi_b}{m_b}^2-{\phi_a}{m_a}^2)
    +K_0\frac{e^{-\phi_b}{m_b}^2
    +e^{-\phi_a}{m_a}^2}{2}(e^{\phi_b}-e^{\phi_a})=0,
\]
which becomes
\beq
  \frac{m_b}{m_a}=
   \sqrt{\frac{\frac{1}{2\sigma-1}\phi_a+\frac{1}{2}-
    \frac{1}{2}e^{\phi_b-\phi_a}}{\frac{1}{2\sigma-1}\phi_b+\frac{1}{2}-
    \frac{1}{2}e^{\phi_a-\phi_b}}}=: f(\phi_a,\phi_b).
\label{f def}
\eeq 
Here, $\phi_a>\phi_b$, or equivalently $h_a<h_b$, since the sound
speed $c$ is greater behind the shock \cite{courant,lax}.
It is clear that the function $f$ in (\ref{f def}) makes sense only if
the function inside the square root is nonnegative.  To check that
this holds, consider the function
\[
  q(x,y):=\frac{\frac{1}{2\sigma-1}x+\frac{1}{2}-
  \frac{1}{2}e^{y-x}}{\frac{1}{2\sigma-1}y+\frac{1}{2}-
  \frac{1}{2}e^{x-y}},
\] 
for $x>y>0$.   Denote the numerator and denominator of $q$ by 
\begin{align*}
  qn(x,y)&:=\frac{1}{2\sigma-1}x+\frac{1}{2}- \frac{1}{2}e^{y-x}
\com{and}\\
  qd(x,y)&:=\frac{1}{2\sigma-1}y+\frac{1}{2}- \frac{1}{2}e^{x-y},
\end{align*}
respectively.  It is immediate that for $x>0$,
\[
  qn(x,x)>0,\quad qn(x,0)>0, \quad
  qd(x,x)>0,\quad qd(x,0)<0.
\]
Since $qd(x,y)$ is strictly increasing with respect to $y$, for each
$x$, $qd(x,y)=0$ has a unique solution $0<y=\varphi(x)<x$.  On the
other hand, $qn(x,y)$ is strictly decreasing with respect to $y$, so
$qn(x,y)>0$ for $x>y>0$.  Hence, for fixed $x$,
\[
  q(x,y)>0 \com{as long as} \varphi(x) < y < x,
\]
while also
\[
  \lim_{y\to\varphi(x)^+}q(x,y)=+\infty.
\]

It follows that, for fixed $\phi_a$, $0<\varphi(\phi_a)<\phi_a$ and
the shock curve is parameterized by 
$\phi_b\in(\varphi(\phi_a),\phi_a)$, with
\beq 
  \lim_{\phi_b\rightarrow\varphi(\phi_a)^+}f(\phi_a,\phi_b)=+\infty.
\label{f varphi}
\eeq
Moreover, $f(\phi_a,\phi_b)$ increases with respect to $\phi_a$ and
decreases with respect to $\phi_b$.  Since $qd(x,y)=qn(y,x)$, we also
have
\[
  f(\phi_a,\phi_b)=\frac{1}{f(\phi_b,\phi_a)}.
\] 

Next, in these coordinates, \eq{ueq} is
\begin{align}
  [u]&=-\sqrt{K_0}\sqrt{( e^{-\phi_b}{m_b}^2-
         e^{-\phi_a}{m_a}^2)(e^{\phi_a}-e^{\phi_b})} \nn\\
    &=-\sqrt{K_0}\sqrt{( e^{\phi_a-\phi_b}\frac{{m_b}^2}{{m_a}^2}- 1)
        (1-e^{\phi_b-\phi_a})}\;m_a.
\label{ushock}
\end{align}
Defining
\beq
  g(\phi_a,\phi_b):=-\sqrt{K_0}\sqrt{(
  e^{\phi_a-\phi_b}f^2(\phi_a,\phi_b)- 1) (1-e^{\phi_b-\phi_a})},
\label{gd}
\eeq
it is easy to check that 
\[
  g(\phi_a,\phi_b)m_a=g(\phi_b,\phi_a)m_b,
\]
provided \eq{f def} holds, and $g(\phi_a,\phi_b)$ decreases with
respect to $\phi_a$ and increases with respect to $\phi_b$.  By
(\ref{f varphi}), since $\phi_a>\phi_b$,
\beq
  \lim_{\phi_b\rightarrow
  \varphi(\phi_a)}g(\phi_a,\phi_b)=-\infty.
\label{g varphi}
\eeq

To express the composite wave curves, we define 
\begin{align*}
  G(h_a,h_b) &:= \begin{cases}
    g(\phi_a,\phi_b),&  h_a<h_b<h(\varphi(\phi_a)), \\
    (h_a-h_b)\phi_a^\sigma, & h_a\ge h_b,
  \end{cases}  \com{and} \\
  F(h_a,h_b) &:= \begin{cases}
    f(\phi_a,\phi_b),& h_a<h_b<h(\varphi(\phi_a)), \\
    \phi_a^\sigma/\phi_b^\sigma, & h_a\ge h_b,
  \end{cases}
\end{align*}
with $\phi=\phi(h)$, and $\varphi(x)$ is defined by $qd(x,\vp(x))=0$.
Both $F(h_a,h_b)$ and $G(h_a,h_b)$ are defined on the region
$\{h_b<\vp(h_a)\}$, with $F$ increasing and $G$ decreasing in $h_b$
for fixed $h_a$.  Moreover, by \eq{philim}, we have the limits
\beq
  \lim_{h_b\to-\infty}F(h_a,h_b)=0 \com{and}
  \lim_{h_b\to-\infty}G(h_a,h_b)=+\infty,
\label{FGlim}
\eeq
for any $h_a$ fixed, and by (\ref{f varphi}) and (\ref{g varphi}),
\beq
  \lim_{h_b\to h(\varphi(\phi_a))}F(h_a,h_b)=+\infty \com{and}
  \lim_{h_b\to h(\varphi(\phi_a))}G(h_a,h_b)=-\infty.
\label{FGlim2}
\eeq

Combining \eq{simple} and \eq{f def}, \eq{ushock}, we describe the
composite $1$- and $3$-wave curves by
\beq
   u_r-u_l=G(h_a,h_b)m_a,\qquad
   \frac{m_b}{m_a}=F(h_a,h_b),
\label{1 3 wave}
\eeq
while the $2$-waves are given by \eq{mcontact}, namely
\[
   u_r=u_l,\qquad
   \frac{m_r}{m_l}=e^\frac{{\phi_r}-{\phi_l}}{2}.
\]

We now solve the Riemann problem by resolving the intermediate states.
We use subscripts $L$, $1$, $2$ and $R$ to denote the left,
intermediate, and right states, respectively.  We set
\[
  A:=\frac{u_R-u_L}{m_L} \com{and}
  B:=\frac{m_R}{m_L},
\] 
and use (\ref{1 3 wave}) and (\ref{mcontact}) to describe the waves,
and eliminate $u$, to get the equations
\begin{align}
  G(h_L,h_1)+B\,G(h_R,h_2)&=A, 
\label{Geq}\com{and}\\
  F(h_L,h_1)\,e^{-\phi_1/2}&=B\, F(h_R,h_2)\,e^{-\phi_2/2},
\label{Feq}
\end{align}
and we must solve for $h_1$ and $h_2$.

For any fixed $h_0$, the function $F(h_0,h)\,e^{-\phi(h)/2}$ is increasing
in $h$, and by \eq{FGlim}, \eq{FGlim2}, it has range $(0,\infty)$.  Thus
there exists a unique (increasing and $C^2$) function $\Theta$ of
$h_2$, such that
\[
  h_1=\Theta(h_2;B,h_L,h_R)
\]
if and only if \eq{Feq} holds.  Substituting into \eq{Geq} gives the
equation
\[
  G(h_L,\Theta(B,h_L,h_R,h_2))+B\, G(h_R,h_2)=A,
\]
and we must solve for $h_2$.  Since $G(h_0,h)$ is decreasing with $h$,
and by \eq{FGlim}, \eq{FGlim2} has range $(-\infty,\infty)$, there is
a unique solution $h_2 < \vp(\phi_R)$ of this equation.  Note that we
also have $h_1 < \vp(\phi_L)$.  Finally, we use \eq{1 3 wave} to fully
resolve the intermediate states, and the construction of the Riemann
solution is complete.
\end{proof}

\end{document}